\newenvironment{amssidewaysfigure}
  {\begin{sidewaysfigure}\vspace*{.5\textwidth}\begin{minipage}{\textheight}\centering}
  {\end{minipage}\end{sidewaysfigure}}
\newcommand{\CC}{{\mathbb{C}}}
\newcommand{\cI}{{\mathcal{I}}}
\newcommand{\cN}{{\mathcal{N}}}
\newcommand{\cO}{{\mathcal{O}}}
\newcommand{\cS}{{\mathcal{S}}}
\DeclareMathOperator{\GL}{GL}
\DeclareMathOperator{\Gr}{Gr}
\DeclareMathOperator{\Id}{Id}
\theoremstyle{plain}
\newtheorem{theorem}{Theorem}[section]
\newtheorem{lemma}[theorem]{Lemma}
\newtheorem{prop}[theorem]{Proposition}
\theoremstyle{definition}
\newtheorem{definition}[theorem]{Definition}
\newtheorem{example}[theorem]{Example}
\theoremstyle{remark}
\newtheorem{remark}[theorem]{Remark}
\newcommand{\Xc}{\mathcal X}
\newcommand{\gl}{\mathfrak{gl}}
\newcommand{\nf}{\mathfrak n}
\title{Partial order on involutive permutations and double Schubert cells}
\author{Evgeny Smirnov}
\email{esmirnov@hse.ru}
\address{HSE University, ul. Usacheva 6, 119048 Moscow, Russia}
\address{Independent University of Moscow, Bolshoi Vlassievskii per. 11, 119002 Moscow, Russia}
\address{Guangdong Technion -- Israel Institute of Technology, 214 Daxue rd, Shantou, Guangdong, 515063, China}
\date{\today}
\begin{document}

\begin{abstract}	 
	As shown by A.\,Melnikov, the orbits of a Borel subgroup acting by conjugation on upper-triangular matrices with square zero  are indexed by involutions in the symmetric group. The inclusion relation among the orbit closures defines a partial order on involutions. We observe that the same order on involutive permutations also arises  while describing the inclusion order on $B$-orbit closures in the direct product of two Grassmannians. We establish a geometric relation between these two settings.
\end{abstract}

\maketitle

\section{Introduction}

Let $\nf_n\subset \gl_n(\CC)$ be the Lie subalgebra of strictly upper-triangular matrices in the Lie algebra of complex $n\times n$ matrices. This subalgebra is equipped with the adjoint action of the standard (upper-triangular) Borel subgroup $B\subset \GL_n(\CC)$; this action has, generally speaking, infinitely many orbits. However, if we restrict this action to the set $\Xc_n\subset \nf_n$ of matrices with square zero, the adjoint action of $B$ on $\Xc_n$ has finitely many orbits. A.\,Melnikov~\cite{Mel00} has shown that these orbits are indexed by involutive permutations $\cI_{n}\subset \cS_n$. The inclusion of $B$-orbit closures on $\Xc_n$ defines a partial order on $\cI_n$, which is different from the Bruhat order. In her further paper~\cite{Mel06} Melnikov provides a simple combinatorial description of this order; another nice combinatorial interpretation was given by A.\,Knutson and P.\,Zinn-Justin in~\cite{KnutsonZinn14}. 

Quite unexpectedly, the same order appears in a different geometric setting. Consider the direct product of two Grassmannians $\Gr(k,n)\times\Gr(m,n)$ of $k$- and $m$-spaces in $\CC^n$. This variety is equipped with a componentwise action of the direct product of two Borel subgroups $B\times B\subset \GL(n)\times\GL(n)$, with its orbits being products of Schubert cells  $X^\circ_\lambda\times X^\circ_\mu$ in Grassmannians. One can also consider a finer orbit decomposition, provided by the diagonal Borel subgroup $B\subset B\times B$. It is well known (cf., for instance,~\cite{Lit94}) that the latter action also has finitely many orbits. Their explicit combinatorial description was obtained in~\cite{Smi08}. Moreover, the $B$-orbits constituting a given $(B\times B)$-orbit $X^\circ_\lambda\times X^\circ_\mu$ are indexed by a specific subset of involutive permutations $\cI_n(\lambda,\mu)\subset \cI_n$, depending upon  $\lambda$ and $\mu$. Like in the previous case, the inclusion of orbit closures defines a partial order on each subset of involutions $\cI_n(\lambda,\mu)$. It turns out that all these poset structures are inherited from the poset structure on $\cI_n$ defined by Melnikov. Our main result is the following theorem.

\begin{theorem}\label{thm:main} The partial order structure on each $\cI_n(\lambda,\mu)$ coming from the inclusion of $B$-orbit closures in the direct product of two Grassmannians is obtained by restricting of the adjoint partial order on $\cI_n$ to $\cI_n(\lambda,\mu)$.
\end{theorem}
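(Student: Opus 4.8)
The plan is to realize both partial orders as the orbit-closure order of a single copy of the diagonal $B$ acting on one space, and then to link the two spaces by an explicit $B$-equivariant morphism intertwining the stratifications.

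First I would reduce the question to a single $(B\times B)$-orbit. For $\sigma,\tau\in\cI_n(\lambda,\mu)$ the orbits $O^{\mathrm{Gr}}_\sigma$ and $O^{\mathrm{Gr}}_\tau$ both lie in the locally closed subvariety $X^\circ_\lambda\times X^\circ_\mu\subset\Gr(k,n)\times\Gr(m,n)$. Since for a subset $S$ of a topological subspace $Y$ one has $\mathrm{cl}_Y(S)=\overline S\cap Y$, the relation $O^{\mathrm{Gr}}_\tau\subseteq\overline{O^{\mathrm{Gr}}_\sigma}$ is equivalent to $O^{\mathrm{Gr}}_\tau\subseteq\mathrm{cl}_{X^\circ_\lambda\times X^\circ_\mu}(O^{\mathrm{Gr}}_\sigma)$. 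Thus the order on $\cI_n(\lambda,\mu)$ is exactly the orbit-closure order of $B$ acting on the single cell $X^\circ_\lambda\times X^\circ_\mu$. The identical remark applied to $\Xc_n$ shows that Melnikov's order restricted to $\cI_n(\lambda,\mu)$ is the orbit-closure order inside any locally closed $B$-stable $Z\subseteq\Xc_n$ containing the corresponding Melnikov orbits $\{O_\sigma:\sigma\in\cI_n(\lambda,\mu)\}$.

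The heart of the argument, and the geometric relation promised in the abstract, is the construction of a $B$-equivariant morphism $\phi=\phi_{\lambda,\mu}\colon X^\circ_\lambda\times X^\circ_\mu\to\Xc_n$ matching the two stratifications. The construction I have in mind is a splitting-difference map: using the flag $F_\bullet$, each of $U$ and $W$ is encoded by section-type data relative to $F_\bullet$ (this is exactly the affine chart on the Schubert cell), and $\phi(U,W)$ is the square-zero endomorphism obtained by composing the projection onto a flag quotient, the difference of the two sections, and the inclusion of the relevant flag subspace. In the model case $\Gr(1,2)\times\Gr(1,2)=\PP^1\times\PP^1$ this recovers $([e_2+t_1e_1],[e_2+t_2e_1])\mapsto(t_1-t_2)E_{12}$, a smooth surjection of the big cell $\AA^2$ onto $\Xc_2=\AA^1$ whose fibres are affine lines. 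In general I expect $\phi$ to surject onto a locally closed $B$-stable $Z_{\lambda,\mu}\subseteq\Xc_n$, smooth with irreducible (affine) fibres, hence open; I would then check, by comparing the coordinate description of $\phi$ with the canonical representatives $\sum_s E_{i_s,j_s}$, that $\phi$ carries $O^{\mathrm{Gr}}_\sigma$ onto the Melnikov orbit $O_\sigma$, that $\phi^{-1}(O_\sigma)=O^{\mathrm{Gr}}_\sigma$, and that $Z_{\lambda,\mu}=\bigsqcup_{\sigma\in\cI_n(\lambda,\mu)}O_\sigma$. This last point is precisely the compatibility between the combinatorial indexing of \cite{Smi08} and Melnikov's indexing in \cite{Mel00}.

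Granting these properties, the conclusion is formal. For an open morphism one has $\phi^{-1}(\overline S)=\overline{\phi^{-1}(S)}$ for every $B$-stable $S$; combined with $\phi^{-1}(O_\rho)=O^{\mathrm{Gr}}_\rho$ and the surjectivity of $\phi$ this gives $O^{\mathrm{Gr}}_\tau\subseteq\overline{O^{\mathrm{Gr}}_\sigma}$ if and only if $O_\tau\subseteq\overline{O_\sigma}$, and the first paragraph converts this into the asserted equality of the two orders on $\cI_n(\lambda,\mu)$. The main obstacle is the construction of $\phi$ in full generality: writing down the square-zero operator attached to an arbitrary pair $(U,W)$ so that it extends to a morphism on the whole cell (replacing it by a correspondence if a global morphism is obstructed), proving that it is an open fibration onto its image, and — the genuinely combinatorial part — verifying that the involution attached to a $B$-orbit of pairs in \cite{Smi08} coincides with the one attached to its image in $\Xc_n$ in \cite{Mel00}. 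Once the labelled fibration $\phi$ is in hand, neither the explicit description of Melnikov's order from \cite{Mel06} nor that of the Grassmannian order is needed: the geometric map transports one closure order onto the other directly.
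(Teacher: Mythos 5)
Your first paragraph (reducing to a single cell, and the observation that for an open continuous map $\phi^{-1}(\overline{A})=\overline{\phi^{-1}(A)}$) is fine, but the object the whole argument rests on --- a $B$-equivariant \emph{smooth/flat, hence open} surjection $\phi\colon X^\circ_\lambda\times X^\circ_\mu\to Z_{\lambda,\mu}\subseteq\Xc_n$ with $\phi^{-1}(B\cdot w_<)=\cO^w_{\lambda\mu}$ --- cannot exist in general, so this is not merely a deferred construction. If $\phi$ were flat (a fortiori smooth) with irreducible source and target, all of its fibres would have dimension $\dim(X^\circ_\lambda\times X^\circ_\mu)-\dim Z_{\lambda,\mu}$; with the prescribed preimages, the fibre dimension over the stratum $B\cdot w_<$ is forced to be $\dim\cO^w_{\lambda\mu}-\dim B\cdot w_<$, and this quantity is \emph{not} constant on $\cI_n(\lambda,\mu)$. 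Concretely, take Example~\ref{ex:arcdiag}: $n=9$, $\lambda=(5,4,2,1)$, $\mu=(4,4,4,1,1)$, so $|\lambda|+|\mu|=26$. Using Theorem~\ref{thm:smi08}\,(iii) and Theorem~\ref{thm:nilp-rank}: for the open orbit, labelled by $w_{\max}=(12)(59)(67)$, one gets $\dim\cO^{w_{\max}}_{\lambda\mu}=26$ while $\dim B\cdot (w_{\max})_<=3\cdot 6-1=17$ (difference $9$); for $w=(17)(59)$ one gets $22$ versus $2\cdot 7-7=7$ (difference $15$); for $w=\Id$ one gets $19$ versus $0$ (difference $19$). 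No relabelling fixes this, since the two dimension functions have ranges of different lengths. So $\phi$ cannot be flat or smooth, the openness you invoke is exactly what fails, and asserting the closure-exchange property for some non-open $\phi$ is equivalent to the theorem itself --- i.e.\ circular. (Your model case $\PP^1\times\PP^1$ and, more generally, $\lambda=\mu$ a full rectangle are misleadingly special: there are no grey vertices and the difference happens to be constant. Relatedly, the ``difference of the two sections'' only makes literal sense when the two Schubert cells are graphs over the same coordinate subspace; for $k\neq m$ or different pivot sets there is no difference to take.)

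For comparison, the paper sidesteps equivariance entirely: it constructs a \emph{non-equivariant} affine slice $S\subset X^\circ_\lambda\times X^\circ_\mu$ of dimension $d(\Id,\lambda,\mu)$, with coordinates $t_{ij}$ indexed by the black--white pairs $i<j$, meeting every orbit closure transversally, and embeds $S$ into $\Xc_n$ by declaring $t_{ij}$ to be the $(i,j)$ matrix entry; the matching of labels is then checked on the canonical representatives of Theorem~\ref{thm:smi08}\,(ii) using the $B$-invariance of the ranks of the southwestern submatrices (Remark~\ref{rem:ranks}). Because a slice sees each orbit only in a transversal of dimension $d(\Id,\lambda,\mu)-d(w,\lambda,\mu)$, no equidimensionality of fibres is needed. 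If you want to keep a transport-of-structure argument, this is the shape it has to take; the global equivariant fibration you propose is obstructed by the dimension count above.
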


This note is organized as follows. In Section~\ref{sec:prelim}, we recall the results of A.\,Melnikov on $B$-orbits in strictly triangular matrices with square zero; here we actively use the notation introduced by A.\,Knutson and P.\,Zinn-Justin. We also recall some basic facts on Schubert cells in Grassmannians. In Section~\ref{sec:grgr}, we give a combinatorial enumeration of $B$-orbits in a $(B\times B)$-orbit in the direct product of two Grassmannians and state the main result. The proof of the main result is given in Section~\ref{sec:proof}.

\section{Preliminaries}\label{sec:prelim}
	
\subsection{$B$-orbits in strictly triangular matrices with square zero}\label{sec:melnikov}

Consider the nilpotent orbit closure $\cN_n=\{X\in \gl_n(\CC)\mid X^2=0\}$. It is well known that this is a \emph{spherical variety}: the standard (upper-triangular) Borel subgroup $B\subset\GL(n)$ acts on $\cN$ by matrix conjugation with finitely many orbits. This set of orbits is a ranked poset, with the rank defined as the dimension of an orbit and the partial order defined by inclusion of orbit closures.

We will be interested not in the whole variety $\cN_n$, but rather in its intersection $\Xc_n=\cN_n\cap\nf_n$ with the set of strictly upper-triangular matrices. This situation was thoroughly studied by A.\,Melnikov \cite{Mel00,Mel06}. It turns out that the $B$-orbits in $\Xc_n$ are indexed by involutive permutations of $\{1,\dots,n\}$; we will denote the set of such permutations by $\cI_n=\{w\in \cS_n\mid w^2=\Id\}$.

\begin{theorem}[\cite{Mel00}] The set of $B$-orbits in $\Xc_n$ bijectively corresponds to the set of involutive permutations $\cI_n$. For each  orbit $\cO\subset \Xc_n$, there exists a unique permutation $w\in \cI_n$ such that $\cO=B\cdot w_<$. Here $w_<$ denotes the permutation matrix corresponding to $w$ with its diagonal and lower-triangular part replaced by zeros: $(w_<)_{ij}=1$ if $w(i)=j$ and $i<j$,	 and $(w_<)_{ij}=0$ otherwise.
\end{theorem}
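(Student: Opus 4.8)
The statement packages three assertions: that each matrix $w_<$ actually lies in $\Xc_n$, that distinct involutions give non-conjugate matrices, and that every $X\in\Xc_n$ is $B$-conjugate to some $w_<$. The first is a direct check. If $w\in\cI_n$ and $(w_<)_{ij}=(w_<)_{jk}=1$, then $w(i)=j$ with $i<j$ and $w(j)=k$ with $j<k$; but $w(i)=j$ and $w^2=\Id$ force $w(j)=i<j$, contradicting $j<k$. Hence $(w_<)^2=0$, and $w_<$ is strictly upper triangular by construction, so $w_<\in\Xc_n$ and its orbit $B\cdot w_<$ is well defined.

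For the separation of orbits I would use the intersection dimensions with the standard flag. Writing $V_i=\langle e_1,\dots,e_i\rangle$, set $c_{ij}(X)=\dim\big(XV_i\cap V_j\big)$ for $0\le i,j\le n$. These are $B$-conjugacy invariants for an immediate reason: every $b\in B$ satisfies $bV_i=V_i$, so $(bXb^{-1})V_i=bXV_i$ and therefore $\dim\big((bXb^{-1})V_i\cap V_j\big)=\dim\big(b(XV_i\cap V_j)\big)=c_{ij}(X)$. A short computation gives $c_{ij}(w_<)=\#\{\text{arcs }a\to w(a)\text{ of }w:\ w(a)\le i,\ a\le j\}$, the number of arcs of the involution whose right endpoint is $\le i$ and left endpoint is $\le j$. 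The arcs are then recovered by the double difference
\[ c_{ij}(w_<)-c_{i-1,j}(w_<)-c_{i,j-1}(w_<)+c_{i-1,j-1}(w_<)=(w_<)_{ji}, \]
so the family $\big(c_{ij}\big)$ determines $w$ uniquely. In particular the orbits $B\cdot w_<$ for $w\in\cI_n$ are pairwise distinct.

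The substance of the theorem is the existence of the normal form: every $X\in\Xc_n$ must be conjugated into the form $w_<$. The elementary move is conjugation by $I+tE_{pq}$ with $p<q$. Since $p<q$ and $X$ is strictly upper triangular, the only potentially nonzero entry of the quadratic correction $E_{pq}XE_{pq}$ sits in position $(p,q)$ and equals $X_{qp}=0$; thus the action is \emph{exactly} linear, $X\mapsto X+t\big(E_{pq}X-XE_{pq}\big)$, i.e.\ adding $t$ times row $q$ to row $p$ while simultaneously subtracting $t$ times column $p$ from column $q$. I would run a pivoting scheme that processes the columns from right to left: in each column pick the lowest nonzero entry as a pivot, normalize it to $1$, and use the linked operations above to clear the remaining entries of that column and of the pivot's row. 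The surviving pivots form a strictly upper triangular $0/1$ matrix with at most one entry in each row and column.

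The main obstacle, and the precise point where the hypothesis $X^2=0$ is indispensable, is controlling \emph{fill-in}. Each elimination is a linked row–column pair, so clearing an entry perturbs a column to the left and a row, and this threatens both to reintroduce nonzero entries into already-processed positions and to create a second pivot in a row or column meant to be used only once. Reading the square-zero condition as $\operatorname{im}X\subseteq\Ker X$ forbids two-step chains: no index can be at once the head of one arc and the tail of another. I would combine this with the right-to-left ordering of the pivots to argue that every induced fill-in lands either strictly below the diagonal (hence is zero) or in a position scheduled for later clearing, so the scheme terminates at a matrix of the form $w_<$ with each index used at most once — exactly the arc diagram of an involution. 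By the $B$-invariance of the numbers $c_{ij}$, this $w$ coincides with the involution read off from $c_{ij}(X)$. Together with the separation established above, this yields the claimed bijection between $B$-orbits in $\Xc_n$ and $\cI_n$.
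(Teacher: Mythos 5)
The paper offers no proof of this statement---it is imported verbatim from Melnikov \cite{Mel00}---so there is nothing internal to compare your argument with; I review it on its own terms. Your first two steps are correct and complete: the verification that $(w_<)^2=0$, and the separation of orbits via the $B$-invariants $c_{ij}(X)=\dim(XV_i\cap V_j)$ (essentially equivalent to the southwest ranks of Remark~\ref{rem:ranks}) together with the double-difference formula recovering the matrix $w_<$.

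The gap is in the reduction to normal form, which is the real content of the theorem. Your elimination scheme---sweep the columns from right to left, take the lowest nonzero entry of each column as a pivot, and clear the rest of that column and of the pivot's row---is incompatible with the moves actually available under $B$-conjugation. Conjugation by $I+tE_{pq}$ ($p<q$) only adds lower rows to upper rows and left columns to right columns, so a pivot at $(p,j)$ can clear entries \emph{above} it in column $j$ and entries \emph{to the right} of it in row $p$, but never entries to its left in row $p$. Already for $n=3$ and $X=E_{12}+E_{13}$ your scheme places a pivot at $(1,3)$ and must then clear $X_{12}$; but every conjugation changes $X_{12}$ by a multiple of $X_{22}-X_{11}$ or of $X_{q2}$ with $q\geq 2$, all zero, so $X_{12}$ can only be rescaled, never killed. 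The invariant $\rk X_{12}=1$ (southwest block on rows $1$--$3$, columns $1$--$2$) forces the true normal form to be $E_{12}=w_<$ for $w=(12)$, reached by subtracting column $2$ from column $3$---that is, the entry your scheme enshrines as a pivot is the one that must be eliminated. A correct sweep has to select pivots that are bottom-most in their column \emph{and leftmost in their row} (equivalently, the positions where the double differences of the ranks $\rk X_{ij}$ jump), which a rightmost-column-first order cannot do. Separately, the fill-in control---that $X^2=0$ plus the processing order pushes all induced perturbations below the diagonal or into later positions---is precisely the heart of Melnikov's argument, and in your text it is announced (``I would argue'') rather than proved. As written, the existence half of the bijection is not established.
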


Following the paper~\cite{KnutsonZinn14} by A.\,Knutson and P.\,Zinn-Justin, we will denote involutive permutations by arc diagrams. Namely, we draw nodes indexed by $1,\dots,n$ on a line and, if $w(i)=j$, join nodes $i$ and $j$ by an arc; if $w(i)=i$, we draw a vertical half-line from the node $i$, as shown in Figure~\ref{fig:arcexample} below.

\begin{figure}[h!]
\begin{tikzpicture}
		\draw (-0.5,0)--(7.5,0);
		\foreach \x in {1,...,8} \filldraw	(\x-1,0) circle [radius=0.1] node[below] {\footnotesize{\x}};
		\draw (1,0) to[out=90,in=90] (2,0);
		\draw (3,0)--(3,2);
		\draw (5,0)--(5,2);
		\draw (0,0) to[out=90,in=90] (6,0);
		\draw (4,0) to[out=90,in=90] (7,0);
\end{tikzpicture}	
\caption{Arc diagram corresponding to $w=\underline{73248615}=(17)(23)(58)$.}\label{fig:arcexample}
\end{figure}
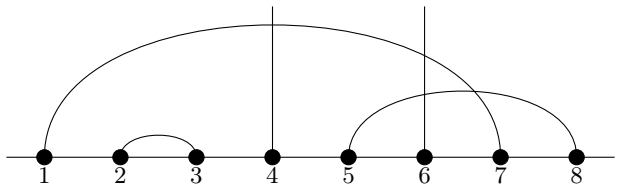

Such a presentation is very useful for computing the dimension of an orbit and describing the inclusion order on orbit closures. This is given by the following theorems.

\begin{theorem}[{\cite[\S2.7]{Mel06}}; {\cite[Theorem 4]{KnutsonZinn14}}]\label{thm:nilp-rank} Let $w\in\cI_n$. Then the dimension of the corresponding $B$-orbit $B\cdot w_<$ is equal to
\[
\dim B\cdot w_<=\#\text{arcs}\cdot(\#\text{arcs}+\#\text{half-lines})-\#\text{crossings}.
\]	
\end{theorem}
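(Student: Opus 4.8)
The plan is to compute the dimension of $\cO=B\cdot w_<$ infinitesimally. Since $\cO$ is a homogeneous space for $B$ acting by conjugation, it is smooth and $\dim\cO=\dim[\mathfrak b,w_<]$, the dimension of the image of the infinitesimal action $\mathfrak b\to\nf_n$, $\xi\mapsto[\xi,w_<]$, where $\mathfrak b=\operatorname{Lie}B$ is the space of upper-triangular matrices. Writing $w_<=\sum e_{ij}$ over the arcs $(i,j)$ (pairs $i<j$ with $w(i)=j$), a direct computation on the matrix-unit basis of $\mathfrak b$ gives
\[
[e_{pq},w_<]=[q\in L]\,e_{p,w(q)}-[p\in R]\,e_{w(p),q},\qquad p\le q,
\]
where $L,R,F$ are the sets of left endpoints, right endpoints and fixed points of $w$ (so $|L|=|R|=\#\text{arcs}$, $|F|=\#\text{half-lines}$) and $[\,\cdot\,]$ is $1$ if the bracketed condition holds and $0$ otherwise. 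Thus every generator is either zero, a single matrix unit, or a difference of two matrix units.

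I would then classify these generators. A generator is a single matrix unit exactly when precisely one of $q\in L$, $p\in R$ holds; the resulting positions $(r,s)$ all have $r\notin R$, and I collect them into a set $P$. A generator is a genuine difference exactly when $p\in R$ and $q\in L$ with $p\le q$; unwinding this shows these ``entangled'' generators are the $e_{j,l}-e_{i,k}$, one for each pair of \emph{side-by-side} arcs $(i,j)$, $(k,l)$ with $i<j<k<l$ (disjoint, neither nested nor crossing). The key structural point is that the leading term $e_{j,l}$ of such a generator has both indices in $R$, whereas every position of $P$ has its row outside $R$; moreover distinct side-by-side pairs give distinct leading positions. Hence the single-matrix-unit generators and the entangled generators together have pairwise distinct leading positions, so no relations survive beyond the evident ones, and
\[
\dim\cO=|P|+\#\{\text{side-by-side pairs of arcs}\}.
\]

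It remains to prove the combinatorial identity $|P|+\#\{\text{side-by-side pairs}\}=\#\text{arcs}\cdot(\#\text{arcs}+\#\text{half-lines})-\#\text{crossings}$, where a crossing means either two arcs crossing or an arc passing over a half-line (a fixed point $k$ with $i<k<j$ for some arc $(i,j)$). I would organize this by summing the number $\nu(s)$ of leading positions in each column $s$: columns $s\in L$ contribute nothing, a column $s\in R$ contributes $w(s)+\#\{\text{arcs nested under the arc ending at }s\}$, and a column $s\in F$ contributes $\#\{\text{arcs lying entirely to its left}\}$. Writing $\#\text{arcs}\cdot(\#\text{arcs}+\#\text{half-lines})=|L|\cdot|R\cup F|$ and expanding each endpoint as its rank among all points, the identity collapses, after cancellation, to the elementary bijection between pairs $(r,\ell)$ with $r\in R$, $\ell\in L$, $r<\ell$ and side-by-side pairs of arcs.

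The bookkeeping in the last step is routine; the part that needs genuine care — and where I expect the main difficulty — is the independence argument, i.e.\ verifying that the only linear relations among the $[e_{pq},w_<]$ are the obvious single-generator ones, with no longer chains of cancellation. This is exactly what the ``leading position'' analysis secures: each entangled generator contributes a fresh matrix unit in $R\times R$ occurring in no other generator, so the entangled generators are independent both of each other and of the pure ones. Once this is established, the role of crossings becomes transparent: crossing and nested arc-pairs, together with fixed points covered by arcs, are precisely the configurations that fail to yield an independent new direction, and the term $-\#\text{crossings}$ records exactly this deficit.
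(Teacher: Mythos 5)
The paper states this theorem as a known result, citing Melnikov and Knutson--Zinn-Justin, and gives no proof of its own; so there is no in-paper argument to compare against, and your proposal has to stand on its own. It does: the tangent-space computation is correct. The formula $[e_{pq},w_<]=[q\in L]\,e_{p,w(q)}-[p\in R]\,e_{w(p),q}$ is right, and your independence argument is sound because the single matrix units all sit in rows outside $R$, while each entangled generator $e_{j,l}-e_{i,k}$ (one per side-by-side pair, since $p\in R$, $q\in L$, $p\le q$ forces $w(p)<p<q<w(q)$) has its leading position $(j,l)$ in $R\times R$ with distinct pairs giving distinct $(j,l)$; projecting onto the $R\times R$ coordinates kills the entangled coefficients and the rest follows, so $\dim B\cdot w_<=|P|+\#\{\text{side-by-side pairs}\}$. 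I also checked the ``routine bookkeeping'' you defer: with the column counts exactly as you state (the nested term in a column $s\in R$ arising from the overlap of the two families of single units in that column), expanding each left endpoint $w(s)$ as $1+\#\{t<w(s)\}$ and sorting $t$ by type gives $\#\text{arcs}+\binom{\#\text{arcs}}{2}+\#\{\text{side-by-side}\}+\#\{(\text{arc},\text{fixed point})\ \text{with the fixed point on the left}\}$; the last term cancels against the fixed-point columns, and $\#\{\text{side-by-side}\}+\#\{\text{nested}\}=\binom{\#\text{arcs}}{2}-\#\{\text{arc--arc crossings}\}$ collapses the total to $\#\text{arcs}\cdot(\#\text{arcs}+\#\text{half-lines})-\#\text{crossings}$, with crossings counted as in the paper's convention (arc--arc crossings plus arcs passing over half-lines). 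A spot check on $w=(17)(23)(58)$, $n=8$, gives $10+1=11=3\cdot 5-4$ on both sides. The only criticism is presentational: the two steps you flag as routine are asserted rather than written out, and a complete writeup should include the column-by-column count, since that is where the crossing and nesting statistics actually enter.
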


The maximal dimension of $B\cdot w_<$, equal to $\lfloor n^2/4\rfloor$, is achieved for crossingless arc diagrams with $\lfloor n/2\rfloor$ arcs. For $n\geq 3$, since the number of such arc diagrams is greater than one, the variety $\Xc_n$ is reducible (but equidimensional). Its irreducible components are called \emph{orbital varieties}.

The inclusion order on $B$-orbits also admits a nice description in terms of arc diagrams. Denote by $r_{ij}(w)$, with $i<j$, the number of pairs $(i',j')$ such that $i\leq i'<j'\leq j$ and $w(i')=j'$. Equivalently, this is the number of whole arcs in the interval $[i,j]$.

\begin{theorem}[{\cite[\S2.10]{Mel06}}; {\cite[Theorem 5]{KnutsonZinn14}}]\label{thm:nilp-rank2} For two involutions $v,w\in\cI_n$, we have $B\cdot v_<\subseteq \overline{B\cdot w_<}$ if and only if $r_{ij}(v)\leq r_{ij}(w)$ for each $i<j$.
\end{theorem}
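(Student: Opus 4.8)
The plan is to reformulate the closure order through a family of $B$-invariant rank functions and then split into an easy necessity direction and a harder sufficiency direction. For $X\in\nf_n$ and $1\le i\le j\le n$, set
\[
\rho_{ij}(X)=\rk\big(X[\{i,\dots,n\}\times\{1,\dots,j\}]\big)=\dim\frac{X(V_j)+V_{i-1}}{V_{i-1}},
\]
where $V_\ell=\langle e_1,\dots,e_\ell\rangle$ is the standard flag preserved by $B$. Since $bV_\ell=V_\ell$ for $b\in B$, conjugation sends $X(V_j)$ to $bX(V_j)$ and acts on $\CC^n/V_{i-1}$ by an automorphism, so each $\rho_{ij}$ is constant on $B$-orbits; being a rank, it is lower semicontinuous on $\nf_n$, and thus $\{\rho_{ij}\le c\}$ is closed for every $c$. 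Because $w$ is an involution, the nonzero entries of $w_<$ lie in distinct rows and columns, so $\rho_{ij}(w_<)$ counts exactly those entries in rows $\ge i$ and columns $\le j$, i.e. the arcs $(a,b)$ with $i\le a<b\le j$; hence $\rho_{ij}(w_<)=r_{ij}(w)$. Necessity is then immediate: if $B\cdot v_<\subseteq\overline{B\cdot w_<}$, the closed condition $\rho_{ij}\le r_{ij}(w)$, satisfied on the orbit, persists on its closure, so evaluating at $v_<$ gives $r_{ij}(v)=\rho_{ij}(v_<)\le r_{ij}(w)$ for all $i<j$.

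For sufficiency I would first record the clean reformulation
\[
Z_w:=\{X\in\Xc_n\mid \rho_{ij}(X)\le r_{ij}(w)\ \text{for all }i<j\}=\bigsqcup_{v\le_r w}B\cdot v_<,
\]
where $v\le_r w$ abbreviates the rank inequalities; this holds because $\rho_{ij}$ is constant equal to $r_{ij}(v)$ on $B\cdot v_<$. Thus $Z_w$ is a closed, $B$-stable union of orbits containing $\overline{B\cdot w_<}$ (by necessity), and the theorem becomes the equality $Z_w=\overline{B\cdot w_<}$, i.e. the assertion that $v\le_r w$ forces $B\cdot v_<\subseteq\overline{B\cdot w_<}$.

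To prove this I would argue along covering relations of the combinatorial order $\le_r$. Using the dimension formula of Theorem~\ref{thm:nilp-rank}, one checks that $\le_r$ is graded with rank $\dim B\cdot w_<$, so any $v<_r w$ is joined to $w$ by a saturated chain; since orbit-closure inclusion is transitive, it suffices to realize every cover $v\lessdot_r w$ by a single degeneration $B\cdot v_<\subseteq\overline{B\cdot w_<}$. Concretely, the covers correspond to local moves on arc diagrams — resolving a crossing into a nested or into a disjoint pair, and the exchanges between an arc and two half-lines — each adjusting exactly one value $\rho_{ij}$ by one, and each realizable by an explicit one-parameter subgroup of $B$ (equivalently, an upper-triangular row-and-column degeneration of $w_<$ whose limit lies in $B\cdot v_<$).

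I expect the sufficiency direction to be the main obstacle, and within it the classification of the covering relations of $\le_r$ together with the uniform construction of the degenerating curves: one must both verify gradedness (that $\dim$ drops by exactly one across each combinatorial cover) and rule out \emph{gaps}, i.e. covers of $\le_r$ admitting no geometric degeneration. By contrast, the $B$-invariance and semicontinuity of the $\rho_{ij}$, the identification $\rho_{ij}(w_<)=r_{ij}(w)$, and the necessity direction are routine.
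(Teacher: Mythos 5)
This statement is a cited theorem (Melnikov; Knutson--Zinn-Justin) for which the paper gives no proof of its own, but your plan coincides exactly with the strategy the paper attributes to \cite{KnutsonZinn14} when discussing the analogous Theorem~\ref{thm:rank}: $B$-invariance and semicontinuity of the ranks $\rho_{ij}$ (cf.\ Remark~\ref{rem:ranks}) for necessity, and explicit degenerations along covering relations for sufficiency. Your necessity argument is complete and correct; the sufficiency half is correctly set up but remains a program rather than a proof --- the classification of the covers of $\le_r$, the verification of gradedness, and the construction of the degenerating one-parameter families, which constitute the actual content of \cite[Theorem 5]{KnutsonZinn14}, are not carried out.
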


This defines a partial order on the set of involutions: we shall say that $v\leq w$ if $B\cdot v_<\subseteq \overline{B\cdot w_<}$. 

\begin{remark}\label{rem:ranks} For an arbitrary element $X\in\Xc_n$, denote by $X_{ij}$ the submatrix formed by the rows $i,\dots,n$ and columns $1,\dots, j$. Suppose $X$ belongs to the orbit $B\cdot w_<$. Then $r_{ij}(w)$ for $i<j$ equals the rank of $X_{ij}$. Indeed, this is true for $X=w_<$, and the ranks of all $X_{ij}$ are constant along the $B$-orbits (they are invariant under the adjoint action of $B$).  Clearly, for $i\geq j$ the submatrices $X_{ij}$ are zero. 
\end{remark}

\begin{remark}
	This order on $\cI_n$ is different from the restriction of the Bruhat order on $\cS_n$ to $\cI_n$. In fact, the Bruhat order on $\cI_n$ corresponds to the inclusion of \emph{coadjoint} orbits, as opposed to the adjoint orbits considered here; for details, see~\cite{Ignatev12}.
\end{remark}

Figure~\ref{fig:main} represents the ranked poset $\cI_4$, with elements of the same rank (that is, with $B$-orbits of the same dimension) listed at the same horizontal level, from 4 (topmost) to 0.

\begin{figure}[h!]
\begin{tikzpicture}
	\begin{scope}[shift={(0,7)},scale=0.5]
		\draw (-0.5,0)--(3.5,0);
		\foreach \x in {1,...,4} \filldraw	(\x-1,0) circle [radius=0.1] node[below] {\footnotesize{\x}};
		\draw (0,0) to[out=90,in=90] (1,0);
		\draw (2	,0) to[out=90,in=90] (3,0);		
	\end{scope}
	\draw[dashed] (0.6,6.5)--(-1,5.6);
	\draw[dashed] (0.8,6.5)--(1.5,5.6);
	\draw[dashed] (1,6.5)--(7.5,5.6);
	\begin{scope}[shift={(5,7)}, scale=0.5]
		\draw (-0.5,0)--(3.5,0);
		\foreach \x in {1,...,4} \filldraw	(\x-1,0) circle [radius=0.1] node[below] {\footnotesize{\x}};
		\draw (0,0) to[out=90,in=90] (3,0);
		\draw (1,0) to[out=90,in=90] (2,0);
	\end{scope}
	\draw[dashed] (5.6,6.5)--(1.7,5.6);
	\draw[dashed] (5.8,6.5)--(4.7,5.6);
	\begin{scope}[shift={(7,5)}, scale=0.5]
		\draw (-0.5,0)--(3.5,0);
		\foreach \x in {1,...,4} \filldraw	(\x-1,0) circle [radius=0.1] node[below] {\footnotesize{\x}};
		\draw (0,0)--(0,1);
		\draw (1,0)--(1,1);
		\draw (2	,0) to[out=90,in=90] (3,0);		
	\end{scope}
	\draw[dashed] (7.6,4.5)--(6.2,3.6);
	\begin{scope}[shift={(1,5)}, scale=0.5]
		\draw (-0.5,0)--(3.5,0);
		\foreach \x in {1,...,4} \filldraw	(\x-1,0) circle [radius=0.1] node[below] {\footnotesize{\x}};
		\draw (0,0) to[out=90,in=90] (2,0);		
		\draw (1,0) to[out=90,in=90] (3,0);		
	\end{scope}
	\draw[dashed] (2,4.5)--(5.8,3.6);
	\draw[dashed] (1.8,4.5)--(0.8,3.6);
	\begin{scope}[shift={(4,5)}, scale=0.5]
		\draw (-0.5,0)--(3.5,0);
		\foreach \x in {1,...,4} \filldraw	(\x-1,0) circle [radius=0.1] node[below] {\footnotesize{\x}};
		\draw (0,0)--(0,1);
		\draw (1,0) to[out=90,in=90] (2,0);		
		\draw (3,0)--(3,1);		
	\end{scope}
	\draw[dashed] (5,4.5)--(6,3.6);
	\draw[dashed] (4.8,4.5)--(1,3.6);
	\begin{scope}[shift={(-2,5)}, scale=0.5]
		\draw (-0.5,0)--(3.5,0);
		\foreach \x in {1,...,4} \filldraw	(\x-1,0) circle [radius=0.1] node[below] {\footnotesize{\x}};
		\draw (2,0)--(2,1);
		\draw (3,0)--(3,1);
		\draw (0,0) to[out=90,in=90] (1,0);		
	\end{scope}	
	\draw[dashed] (-1.2,4.5)--(0.6,3.6);
	\begin{scope}[shift={(0,3)}, scale=0.5]
		\draw (-0.5,0)--(3.5,0);
		\foreach \x in {1,...,4} \filldraw	(\x-1,0) circle [radius=0.1] node[below] {\footnotesize{\x}};
		\draw (1,0)--(1,1);
		\draw (3,0)--(3,1);
		\draw (0,0) to[out=90,in=90] (2,0);		
	\end{scope}
	\draw[dashed] (0.8,2.5)--(3.6,1.6);
	\begin{scope}[shift={(5,3)}, scale=0.5]
		\draw (-0.5,0)--(3.5,0);
		\foreach \x in {1,...,4} \filldraw	(\x-1,0) circle [radius=0.1] node[below] {\footnotesize{\x}};
		\draw (2,0)--(2,1);
		\draw (0,0)--(0,1);
		\draw (1,0) to[out=90,in=90] (3,0);		
	\end{scope}
	\draw[dashed] (5.6,2.5)--(3.8,1.6);
	\begin{scope}[shift={(3,1)}, scale=0.5]
		\draw (-0.5,0)--(3.5,0);
		\foreach \x in {1,...,4} \filldraw	(\x-1,0) circle [radius=0.1] node[below] {\footnotesize{\x}};
		\draw (2,0)--(2,1);
		\draw (1,0)--(1,1);
		\draw (0,0) to[out=90,in=90] (3,0);		
	\end{scope}
	\draw[dashed] (3.8,0.5)--(3.8,-0.4);
	\begin{scope}[shift={(3,-1)}, scale=0.5]
		\draw (-0.5,0)--(3.5,0);
		\foreach \x in {1,...,4} \filldraw	(\x-1,0) circle [radius=0.1] node[below] {\footnotesize{\x}};
		\draw (0,0)--(0,1);
		\draw (2,0)--(2,1);
		\draw (1,0)--(1,1);
		\draw (3,0)--(3,1);
	\end{scope}
\end{tikzpicture}
\caption{Inclusion order on  arc diagrams for $n=4$}\label{fig:main}
\end{figure}
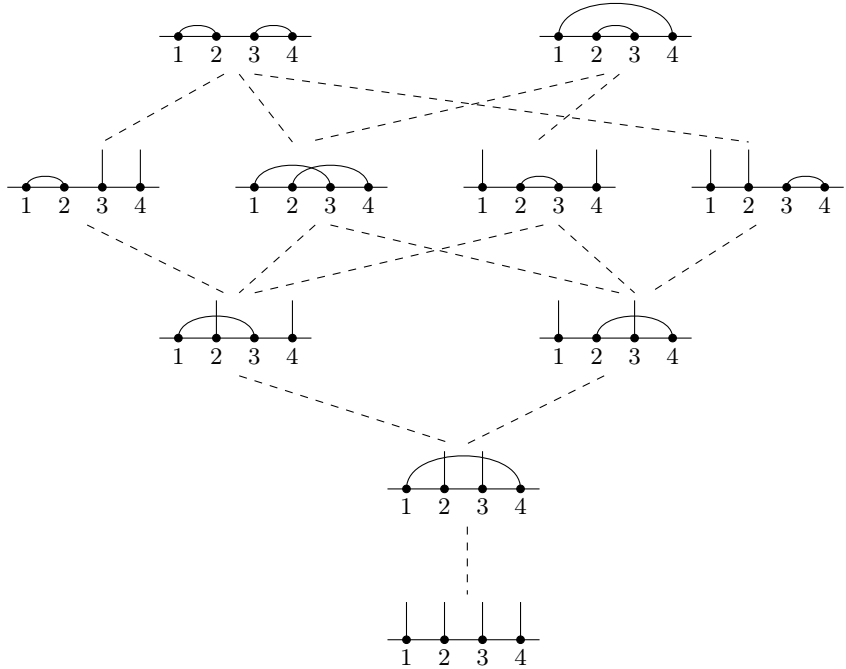	

\subsection{$B$-orbits in Grassmannians} This subsection is devoted to fixing the notation and describing $B$-orbits in one Grassmannian. As before, we let $B
\subset \GL(n)$ be the subgroup of nondegenerate upper-triangular matrices. We also fix the maximal torus $T\subset B$;  it consists of nondegenerate diagonal matrices.

Denote by $\Gr(k,n)$ the Grassmannian of $k$-dimensional vector subspaces in an $n$-dimensional vector space. It is a $\GL(n)$-homogeneous space, with finitely many (namely, $\binom{n}{k}$) orbits of a Borel subgroup $B$. These orbits are indexed by Young diagrams $\lambda=(\lambda_1,\dots,\lambda_k)$ with at most $k$ parts not exceeding $n-k$.

This parametrization is as follows. Each $B$-orbit contains a unique $T$-stable point. If $e_1,\dots,e_n$ denotes the standard basis of $\CC^n$, then the $T$-stable points correspond to subspaces spanned by $k$ basis vectors. To a Young diagram $\lambda$ we assign the following subspace:
\[
U_\lambda=\langle e_{\lambda_k+1}, e_{\lambda_{k-1}+2},\dots,e_{\lambda_1+k}\rangle.
\]
The $B$-orbits, usually called \emph{Schubert cells}, will be further denoted by $X^\circ_{\lambda}=B\cdot U_\lambda$. It is well known that $X_\lambda^\circ$ is isomorphic to an affine space of dimension $|\lambda|$ and that $X_\lambda^\circ\subseteq \overline{X_\mu^\circ}$ if and only if $\lambda\subseteq\mu$; see~\cite{Manivel98}, \cite{Fulton97} or any other textbook on this topic for details.

\section{$B$-orbits in double Grassmannians}\label{sec:grgr} It is well known (see, for example, \cite{Lit94,MWZ99}) that the direct product of two Grassmannians $\Gr(k,n)\times\Gr(m,n)$ is a spherical variety with respect to the action of the diagonal subgroup $B\subset B\times B$. In geometric terms, this means that the number of triples consisting of a $k$-plane, an $m$-plane, and a full flag in $\CC^n$, considered up to $\GL(n)$-action, is finite.

\subsection{Combinatorial description of orbits}
Here we recall the combinatorial description of $B$-orbits acting on the direct product of two Grassmannians $X=\Gr(k,n)\times\Gr(m,n)$. We assume $k$, $m$, and $n$ to be fixed throughout this section. This description appeared in a slightly different form in our paper~\cite{Smi08}. It also follows from much more general results by P.\,Magyar, J.\,Weyman, and A.\,Zelevinsky, see~\cite{MWZ99}.

The $(B\times B)$-orbits in $X$ are indexed by pairs of Young diagrams $\lambda,\mu$, where $\lambda\subseteq k\times (n-k)$ and $\mu\subset m\times (n-m)$ are partitions with at most $k$ (resp. $m$) parts not exceeding $n-k$ (resp. $n-m$).   Each of these orbits is the direct product of two Schubert cells $X^\circ_\lambda\times X^\circ_\mu$. 

Given partitions $\lambda$ and $\mu$ in a rectangle of semiperimeter $n$, we can assign to them \emph{bit strings} (sequences of zeroes and ones) $s(\lambda),s(\mu)\in\{0,1\}^n$ of length $n$ as follows. For $\lambda$, let $s_i(\lambda)=1$ if $i$ occurs among the numbers $\lambda_k+1,\lambda_{k-1}+2,\dots,\lambda_1+k$, and $0$ otherwise. Graphically this can be interpreted as follows: Young diagram $\lambda$ is bounded from below by a lattice path of length $n$, going from the southwestern corner to the northeastern one. The number $s_i(\lambda)$ is equal to $1$ if $i$-th segment is vertical, and to $0$ if it is horizontal. Similarly we define the bit string $s(\mu)$.

Our next goal is to define a subset $\cI_n(\lambda,\mu)$ of the set of involutive permutations $\cI_n\subset S_n$. Take the componentwise sum $s(\lambda,\mu)=s(\lambda)+s(\mu)\in \{0,1,2\}^n$. This is an $n$-tuple consisting of zeroes, ones, and twos. Its $i$-th component will be denoted by $s_i(\lambda,\mu)$.

\begin{definition} An involutive permutation $w\in \cI_n$, $w^2=\Id$, is said to be \emph{consistent with the pair $(\lambda,\mu)$} (or just \emph{$(\lambda,\mu)$-consistent}) if for every pair $(i,j)$, $1\leq i<j\leq n$, such that $w(i)=j$, $w(j)=i$, we have $s_i(\lambda,\mu)=0$ and $s_j(\lambda,\mu)=2$.	 The set of all such permutations is denoted by $\cI_n(\lambda,\mu)$.
\end{definition}

Informally, this means that for each transposition $(i,j)$ occurring in $w$, with $i<j$, the $i$-th segments of the lattice paths defined by both $\lambda$ and $\mu$ are horizontal, while the $j$-th segments of both paths are vertical. This means that the involutions in $\cI_n(\lambda,\mu)$ have prescribed sets of possible ``left endpoints'' and ``right endpoints'', not necessarily of the same cardinality. 

\begin{example}\label{ex:22square} Let $k=m=2$, $n=4$, and $\lambda=\mu=(2,2)$. Then the set of involutions $\cI_4(\lambda,\mu)$ consistent with these partitions has seven elements:
\[
\Id,\qquad (13), \qquad (14),\qquad (23),\qquad (24),\qquad (13)(24),\qquad (14)(23).
\]	
\end{example}

\begin{example}\label{ex:empty} For certain pairs $\lambda$, $\mu$, the set $\cI_n(\lambda,\mu)$ can consist only of the identity permutation. For example, take $k=m=2$, $n=4$, and $\lambda=(2,1)$, $\mu=(1,0)$. Then $s(\lambda,\mu)=(1,1,1,1)$. Another example with $\cI_n(\lambda,\mu)=\{\Id\}$ is given by $\lambda=(1,0)$, $\mu=(0,0)$. In this case, $s(\lambda,\mu)=(2,1,1,0)$. Since this sequence does not contain $2$'s preceded by $0$'s, any permutation consistent with cannot contain a nontrivial transposition.
\end{example}

Similarly to the previous section, we shall denote involutive permutations by arc diagrams. Let us place $n$ nodes on a line, numbered $1,\dots,n$ from left to right. We will say that $i$-th node is \emph{black} if $s_i(\lambda,\mu)=0$, \emph{white} if $s_i(\lambda,\mu)=2$, and \emph{grey} if $s_i(\lambda,\mu)=1$. Given an involutive permutation, we draw an arc in the upper half-plane joining each pair $(i,j)$ such that $w(i)=j$. Moreover, let us draw vertical half-lines going up from all black and white (but not grey) vertices corresponding to fixed points of $w$. 
An involutive permutation represented in such a form is $(\lambda,\mu)$-consistent if the left end of each arc is black and the right end is white. 

This arc interpretation allows us to define a number $d(w)=d(w,\lambda,\mu)$ for each $w\in\cI_n(\lambda,\mu)$. Define it as follows:
\[
d(w)=\#\{\text{crossings in the arc diagram}\}+\#\{(i,j)\mid i<j, w(i)=i,w(j)=j, s_i=0, s_j=2\}.
\]
The second summand is the number of pairs consisting of a black vertex $i$ and a white vertex $j$ with vertical lines going from them, such that $i<j$. Informally, these two vertical lines can be thought of as ``crossing at infinity''. Note that $d(w)$ depends not only on $w$, but also on $\lambda$ and $\mu$.

\begin{example}\label{ex:arcdiag} Let $k=4$, $m=5$, $n=9$. Consider two Young diagrams $\lambda=(5,4,2,1)$ and $\mu=(4,4,4,1,1)$. Then we have
\[
s(\lambda)=(0,1,	0,1,0,0,1,0,1);\quad s(\mu)=(0,1,1,0,0,0,1,1,1);\quad s(\lambda,\mu)=(0,2,1,1,0,0,2,1,2).
\]
In Figure~\ref{fig:arcexample2} we give the arc diagram of permutation $w=(17)(59)\in\cI_n(\lambda,\mu)$. For this permutation, $d(w,\lambda,\mu)=4$. Indeed, there are four crossings and no pairs of black and white half-lines: note that 2 and 6 do not form such a pair, since the white vertex precedes the black one.
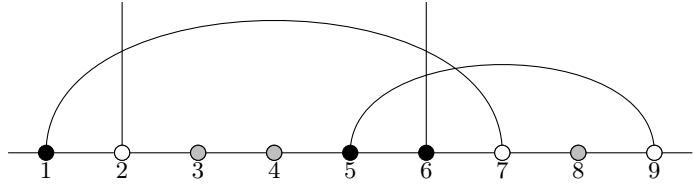
\begin{figure}[h!]
\begin{tikzpicture}
		\draw (-0.5,0)--(8.5,0);
		\foreach \x in {1,...,9} \filldraw	(\x-1,0) circle [radius=0.1] node[below] {\footnotesize{\x}};
		\draw (1,0)--(1,2);

		\draw (5,0)--(5,2);
		\draw (0,0) to[out=90,in=90] (6,0);
		\draw (4,0) to[out=90,in=90] (8,0);
		\foreach \x in {2,7,9} \filldraw[fill=white]	(\x-1,0) circle [radius=0.1];
		\foreach \x in {3,4,8} \filldraw[fill=black!25]	(\x-1,0) circle [radius=0.1];
\end{tikzpicture}	
\caption{Arc diagram corresponding to $w=\underline{723496185}=(17)(59)$.}\label{fig:arcexample2}
\end{figure}

\end{example}

These invariants are essential for describing the inclusion order on $B$-orbits inside the $(B\times B)$-orbit $X_\lambda^\circ\times X_\mu^\circ$.

\begin{theorem}[\cite{Smi08}]\label{thm:smi08}\begin{enumerate}[(i)]\item\label{thm:smi08i} Orbits of the Borel subgroup $B$ inside the $(B\times B)$-orbit $X_\lambda^\circ\times X_\mu^\circ\subset\Gr(k,n)\times \Gr(m,n)$ bijectively correspond to the elements of $\cI_n(\lambda,\mu)$;

	\item\label{thm:smi08ii} Let $e_1,\dots, e_n$ be a basis of $\CC^n$ that agrees with the choice of $B\subset\GL(n)$. Then each orbit $\cO_{\lambda\mu}^w$ is obtained as the $B$-orbit of the  pair of subspaces $(U,W)$, where: 
	\begin{eqnarray*}
		U&=&\left\langle e_{j}\mid s_j(\lambda)=1, w(j)=j\right\rangle+\langle e_{w(j)}+e_{j}\mid s_j(\lambda)=1, w(j)\neq j\rangle,\\ 
		W&=&\langle e_{\mu_1},\dots, e_{\mu_m}\rangle.
	\end{eqnarray*}
	
	\item\label{thm:smi08iii}  the codimension of orbit $\cO^w_{\lambda\mu}\subseteq X_\lambda^\circ\times X_\mu^\circ$ equals $d(w,\lambda,\mu)$.
\end{enumerate} 
\end{theorem}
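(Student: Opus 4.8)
The plan is to prove the three parts together by constructing an explicit normal form for each $B$-orbit, from which the enumeration, the representatives, and the codimension can all be read off. \emph{Reduction to one factor.} Since $X_\mu^\circ=B\cdot U_\mu$, any pair $(U,W)\in X_\lambda^\circ\times X_\mu^\circ$ can be moved by a suitable $b\in B$ so that its second component becomes the standard $T$-fixed point $W=U_\mu$; the remaining freedom is then the subgroup $P=\Stab_B(U_\mu)\subset B$. Projecting each orbit to the second factor and taking the fibre over $U_\mu$ identifies the $B$-orbits in $X_\lambda^\circ\times X_\mu^\circ$ with the $P$-orbits on the single Schubert cell $X_\lambda^\circ$, and yields the bookkeeping $\dim\big(B\cdot(U,U_\mu)\big)=|\mu|+\dim(P\cdot U)$, so that the codimension of the orbit equals $|\lambda|-\dim(P\cdot U)=\codim_{X_\lambda^\circ}(P\cdot U)$. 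Writing $J_\lambda=\{j:s_j(\lambda)=1\}$ and $J_\mu=\{j:s_j(\mu)=1\}$ for the two pivot sets, I would coordinatize $X_\lambda^\circ$ by the usual echelon parameters $c_{ij}$ indexed by pairs with $i<j$, $i\notin J_\lambda$, $j\in J_\lambda$ (there are $|\lambda|$ of them), and describe $P$ concretely as the group of upper-triangular matrices whose entries $g_{ab}$ vanish whenever $a<b$, $a\notin J_\mu$ and $b\in J_\mu$.

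\emph{Normal form; parts (i) and (ii).} The group $P$ acts on the echelon matrix of $U$ by constrained row operations, while changes of basis of $U$ act by column operations, and I would run a Gaussian-style elimination of the parameters $c_{ij}$ under this combined action. Each pivot $j\in J_\lambda$ retains, after elimination, at most one non-cleared tail entry, in some non-pivot row $i<j$, and symmetrically each non-pivot row carries at most one such entry; the surviving data is therefore a partial matching, i.e.\ an involution $w$. The crucial point is to identify which tails survive: a tail in row $i$ under pivot $j$ can be eliminated using $P$ unless it is blocked simultaneously from the side of $U$ and from the side of $W$, which happens exactly when $i$ is a non-pivot for both subspaces, that is $s_i(\lambda,\mu)=0$ (a black node), and $j$ is a pivot for both, that is $s_j(\lambda,\mu)=2$ (a white node). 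Hence the surviving matching is automatically $(\lambda,\mu)$-consistent, $w\in\cI_n(\lambda,\mu)$, and rescaling the surviving tails to $1$ produces precisely the representative of part~(ii), with $U=\langle e_j\mid j\in J_\lambda,\ w(j)=j\rangle+\langle e_{w(j)}+e_j\mid j\in J_\lambda,\ w(j)\neq j\rangle$ and $W=U_\mu$. This establishes surjectivity of $w\mapsto\cO^w_{\lambda\mu}$, and with it parts~(i) and (ii) up to injectivity.

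\emph{Injectivity; part (iii).} Injectivity is cleanest to obtain from the canonicity of the normal form: the involution $w$ is recovered intrinsically from any $U$ in a given $P$-orbit as the pairing of each pivot column with the row of its uneliminable tail, so two distinct involutions cannot produce $P$-equivalent subspaces. If external invariants are preferred, the separating quantities are the \emph{nested} ranks $\dim\big(U\cap(F_a+(W\cap F_b))\big)$ together with their $U\leftrightarrow W$ analogues, which are constant along $B$-orbits; here I would stress the instructive subtlety that the naive pairwise ranks $\dim\big((U+F_a)\cap(W+F_b)\big)$ do \emph{not} suffice, since they fail to distinguish crossing arcs from nested ones. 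For part~(iii) I would then compute $\codim_{X_\lambda^\circ}(P\cdot U_w)$ as the number of echelon coordinates $c_{ij}$ frozen in the normal form, i.e.\ the directions along which $P$ is unable to move $U_w$. Sorting these rigid directions according to the arcs of $w$, one finds that each pair of crossing arcs accounts for exactly one frozen coordinate and each ordered pair of fixed points $(i,j)$ with $i<j$, $s_i=0$, $s_j=2$ accounts for one more, while every remaining coordinate can be normalized away; summing yields $\codim\cO^w_{\lambda\mu}=d(w,\lambda,\mu)$.

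\emph{Main obstacle.} The technical heart, and the step I expect to be hardest, is the constrained elimination of the second paragraph together with the frozen-coordinate count of the third. Because $P$ is a proper subgroup of $B$ --- it may not mix a direction outside $J_\mu$ into one inside $J_\mu$ --- one must argue with care that the elimination really terminates in a matching rather than in some larger incidence relation, that the uneliminable tails are exactly the $(\lambda,\mu)$-consistent ones, and that the normal form is rigid in precisely the crossing and ``crossing-at-infinity'' directions. I expect this to require an induction on the arcs ordered by their right endpoints, tracking how each newly placed arc meets those already present; the resulting bookkeeping of crossings, and the asymmetric role of the two half-lines attached to a pair of fixed points (which is what singles out the ordered pairs having the white vertex on the right), is where the combinatorial difficulty is concentrated.
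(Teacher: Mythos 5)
The paper itself offers no proof of this theorem: it is imported from \cite{Smi08} (with a pointer to \cite{MWZ99} for a more general treatment), so there is no in-text argument to compare yours against. On its own terms, your strategy --- reducing to orbits of $P=\Stab_B(U_\mu)$ on the single cell $X_\lambda^\circ$, running a constrained Gaussian elimination to a partial-permutation normal form, and reading off the involution and the frozen directions --- is a sound and essentially standard route to such a classification, and your identification of the blocked positions as exactly those $(i,j)$ with $s_i(\lambda,\mu)=0$ and $s_j(\lambda,\mu)=2$ is correct (an echelon entry sits in a non-pivot row for $\lambda$ under a pivot column for $\lambda$, and the row operation clearing it lies outside $P$ precisely when the row is also a non-pivot for $\mu$ and the column a pivot for $\mu$). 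Your observation that the pairwise ranks $\dim\bigl((U+F_a)\cap(W+F_b)\bigr)$ fail to separate crossing from nested arcs, so that triple intersections are needed for injectivity, is a genuine and correct subtlety.

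Two caveats. First, the proposal is a plan rather than a proof: the step you yourself flag as the technical heart --- that the constrained elimination terminates in a matching, that exactly the $(\lambda,\mu)$-consistent tails survive, and that the normal form is rigid in precisely the claimed directions --- is where all the content lies, and it is not carried out. Second, and more concretely, your codimension bookkeeping as written undercounts: you attribute one frozen coordinate to each pair of crossing arcs and one to each black-before-white pair of fixed points, but $d(w,\lambda,\mu)$ counts \emph{all} crossings of the arc diagram, including crossings between an arc and a vertical half-line at a black or white fixed point. In the paper's Example~\ref{ex:arcdiag} ($w=(17)(59)$) there is exactly one arc--arc crossing and no black-before-white pair of fixed points, yet $d=4$: the other three rigid directions come from the half-lines at vertices $2$ and $6$ crossing the arcs. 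Unless you intend ``arcs'' to include half-lines in the crossing count (in which case you should say so explicitly, since half-lines do not literally cross one another and that is why the fixed-point pairs are tallied separately), part~(iii) comes out wrong under your accounting.
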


For instance, the ``canonical'' representative of the orbit given by $\lambda$, $\mu$, and $w$ from Example~\ref{ex:arcdiag} is as follows:
\[
U=\langle e_2, e_4, e_1+e_7, e_5+e_9\rangle, \qquad W=\langle e_2,e_3,e_7,e_8,e_9\rangle.
\]


Our next observation is as follows.

\begin{prop} There exists a unique maximal and unique minimal $B$-orbit inside $X_\lambda^\circ\times X_\mu^\circ$.	
\end{prop}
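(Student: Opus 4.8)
The plan is to treat the maximal and the minimal orbit by entirely different mechanisms. For the maximal orbit I will use the codimension function $d(w,\lambda,\mu)$ of Theorem~\ref{thm:smi08}(iii): I claim there is a \emph{unique} $w^{\max}\in\cI_n(\lambda,\mu)$ with $d(w^{\max},\lambda,\mu)=0$, and that its orbit is the dense one. For the minimal orbit I will show that the orbit $\cO^{\Id}_{\lambda\mu}$ attached to the identity permutation (which always lies in $\cI_n(\lambda,\mu)$, the consistency condition being vacuous) is contained in the closure of every other orbit, by exhibiting an explicit one-parameter degeneration.

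For the maximum I first discard the grey nodes: they carry no half-line and are never endpoints of an arc, so they are invisible to $d$. Reading the remaining nodes from left to right yields a word in black ($s_i=0$) and white ($s_i=2$) letters, and I interpret black as an opening and white as a closing bracket, letting $w^{\max}$ be the associated bracket matching (each closing bracket joined to the nearest unmatched opening bracket to its left). I will check that $d(w^{\max},\lambda,\mu)=0$, that is, that this matching \textbf{(a)} has no arc--arc crossing, \textbf{(b)} has no black or white fixed point strictly under an arc, and \textbf{(c)} has no black fixed point to the left of a white fixed point; conditions (a) and (b) are immediate from the nested, balanced structure of bracket matchings, while (c) holds because in any bracket word every unmatched closing bracket precedes every unmatched opening one. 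Conversely I will show that (a)--(c) force $w=w^{\max}$: after the leading white letters (necessarily fixed) are deleted, the word begins with black; at the first white letter, say at position $q$, a short analysis using (a)--(c) shows that the black at position $q-1$ must be joined to it (otherwise either $q$ or $q-1$ produces a forbidden configuration); removing this pair and iterating gives uniqueness. Since $X^\circ_\lambda\times X^\circ_\mu\cong\AA^{|\lambda|+|\mu|}$ is irreducible, the unique orbit of codimension $0$ is open and dense, hence contains every other orbit in its closure and is the unique maximal element of the poset.

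For the minimum, by Theorem~\ref{thm:smi08}(ii) with $w=\Id$ the orbit $\cO^{\Id}_{\lambda\mu}$ is the $B$-orbit of the unique $T$-fixed point $(U_\lambda,U_\mu)$ of $X^\circ_\lambda\times X^\circ_\mu$. I will take a dominant regular one-parameter subgroup $\gamma\colon\CC^\times\to T$, for instance $\gamma(t)=\mathrm{diag}(t^{n-1},\dots,t,1)$, whose exponents pair positively with every positive root. Each Schubert cell is a $T$-invariant affine space on which such a $\gamma$ contracts every point to its unique $T$-fixed point as $t\to 0$ (the standard contracting property of Bialynicki--Birula cells), and the same $\gamma$ serves both factors at once. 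Hence $\lim_{t\to 0}\gamma(t)\cdot x=(U_\lambda,U_\mu)$ for every $x\in X^\circ_\lambda\times X^\circ_\mu$, so $(U_\lambda,U_\mu)\in\overline{\cO^{v}_{\lambda\mu}}$ and therefore $\cO^{\Id}_{\lambda\mu}\subseteq\overline{\cO^{v}_{\lambda\mu}}$ for every $v\in\cI_n(\lambda,\mu)$. Thus $\cO^{\Id}_{\lambda\mu}$ lies below every orbit and is the unique minimal element.

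The main obstacle is the uniqueness half of the maximal statement: the equality $d(w,\lambda,\mu)=0$ bundles three separate combinatorial conditions, and one must verify that they \emph{jointly} pin down the bracket matching. No two of them suffice on their own --- already the two-letter word ``black, white'' admits both the empty matching and the single arc, which are distinguished only by condition (c) --- so the forcing argument must use (a), (b) and (c) simultaneously. By contrast, the contracting-subgroup argument for the minimum is routine once one recalls that a single dominant regular cocharacter contracts every Schubert cell to its base point.
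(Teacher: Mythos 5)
Your proof is correct, and it overlaps with the paper's in its two key ideas (irreducibility of $X_\lambda^\circ\times X_\mu^\circ$ for the maximum, the identity involution and its $T$-fixed representative for the minimum), but the weight is distributed quite differently. For the maximum you do more than is needed: once one knows that finitely many orbits cover the irreducible affine cell $X_\lambda^\circ\times X_\mu^\circ$, exactly one of them is dense, and that orbit is automatically the unique maximal element; the paper stops there and merely \emph{describes} the corresponding arc diagram (your bracket matching is the same greedy matching). Your combinatorial uniqueness of the involution with $d(w,\lambda,\mu)=0$ via conditions (a)--(c) is a correct but logically superfluous detour --- though it does yield, as a by-product, an identification of the top element that the paper asserts without proof. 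For the minimum the situation is reversed: the paper only remarks that $\cO^{\Id}_{\lambda\mu}$ contains the unique $(T\times T)$-stable point and leaves the actual minimality (that $\cO^{\Id}_{\lambda\mu}\subseteq\overline{\cO^{v}_{\lambda\mu}}$ for every $v$) implicit, whereas your contracting cocharacter $\gamma(t)=\mathrm{diag}(t^{n-1},\dots,t,1)$ supplies the missing degeneration explicitly; with the paper's conventions ($B$ upper triangular, pivots of a point of $X_\lambda^\circ$ in positions $\lambda_{k-a+1}+a$ with free entries only in earlier coordinates) the weights $j_a-i>0$ do make every point of the cell flow to $U_\lambda$ as $t\to 0$, so this step is sound. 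Net effect: your argument is heavier than necessary on the easy half and more complete than the paper on the half the paper glosses over.
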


\begin{proof} Existence of a maximal (open) orbit $\cO^{\max}_{\lambda\mu}$ is obvious, since $X_\lambda^\circ\times X_\mu^{\circ}$ is irreducible (it is isomorphic to affine space $\CC^{|\lambda|+|\mu|}$). The corresponding arc diagram is obtained as follows: given a black-white-grey coloring of $\{1,\dots,n\}$, join by an arc a black and a white vertex with possibly only grey vertices between them. Repeat this procedure (ignoring vertices with arcs) until there are no more black-white pairs left. All the remaining black and white vertices, white ones coming before black ones, are joined with infinity. Obviously, such a matching is crossingless.

The minimal orbit $\cO_{\lambda\mu}^{\min} =\cO_{\lambda\mu}^{\Id}$ is easier to construct: it corresponds to the identity permutation $\Id\in \cI_n(\lambda,\mu)$, with all the black and white vertices defining vertical half-lines. Its codimension is equal to the number of pairs consisting of a black and a white vertex, in this order from left to right. Note that this orbit contains a unique $(T\times T)$-stable point: it is exactly the point given in~Theorem~\ref{thm:smi08}, part \ref{thm:smi08ii}.
\end{proof}

\subsection{Inclusion order on $B$-orbit closures}
 In this subsection we give a description of the inclusion order of orbit closures in $X_\lambda^\circ\times X_\mu^\circ$ in terms of ranks.

Namely, for each pair $(i,j)$ consisting of a black and a white vertex (in this order), we define
\[
r_{ij}(w,\lambda,\mu)=\#\{(i',j')\mid w(i')=j', i\leq i'<j\leq j'\}.
\]
In other words, $r_{ij}(w,\lambda,\mu)$ is the number of arcs situated inside the interval $[i,j]$. Then inclusion of $B$-orbit closures inside a $(B\times B)$-orbit is given by inequalities of ranks.

\begin{theorem}\label{thm:rank}
	For any $v,w\in\cI_n(\lambda,\mu)$, we have $\cO_{\lambda\mu}^v \subseteq \overline{\cO_{\lambda\mu}^w}$ if and only if $r_{ij}(w,\lambda,\mu)\geq r_{ij}(v,\lambda,\mu)$ for each $i<j$ with $s_i(\lambda,\mu)=0$ and $s_j(\lambda,\mu)=2$.
\end{theorem}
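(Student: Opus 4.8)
The plan is to relate the rank invariants $r_{ij}(w,\lambda,\mu)$ governing the poset $\cI_n(\lambda,\mu)$ to the rank invariants $r_{ij}(w)$ governing Melnikov's adjoint order, and then invoke Theorem~\ref{thm:nilp-rank2}. The key geometric tool will be Remark~\ref{rem:ranks}: the adjoint order is detected by the ranks of the submatrices $X_{ij}$ (rows $i,\dots,n$, columns $1,\dots,j$) of a representative of the nilpotent orbit, and these ranks are constant along $B$-orbits. So I would first attach to each $B$-orbit $\cO_{\lambda\mu}^w$ a family of numerical invariants that are (a) constant along the orbit and (b) monotone under orbit-closure inclusion, and show these invariants coincide with certain ranks of an explicit matrix built from the representative pair $(U,W)$ of Theorem~\ref{thm:smi08}\ref{thm:smi08ii}.

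Concretely, to a pair of subspaces $(U,W)$ with $U\in\Gr(k,n)$ and $W\in\Gr(m,n)$ I would associate, for each pair of flag-steps, the dimensions $\dim\big((U\cap V_j) + (W\cap V_{j'})\big)$ or equivalently the ranks of the natural maps into $\CC^n/(V_{i-1})$, where $V_\bullet$ is the standard coordinate flag. The first step is to verify that these dimensions are $B$-invariant (immediate, since $B$ preserves the flag $V_\bullet$ and acts linearly) and that they vary semicontinuously under degeneration, so that orbit-closure inclusion forces the expected inequalities in one direction. The second step is the combinatorial computation: for the canonical representative $(U,W)$ given in Theorem~\ref{thm:smi08}\ref{thm:smi08ii}, I would compute these dimensions explicitly in terms of the arc diagram and check that, after subtracting the contribution of the fixed ``background'' coming from $\lambda$ and $\mu$ alone (the grey vertices and the unmatched half-lines), what remains is exactly $r_{ij}(w,\lambda,\mu)$ — the number of arcs contained in the interval $[i,j]$, indexed by black left-endpoints $i$ and white right-endpoints $j$. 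This identifies the $B$-invariant dimension data with the combinatorial rank function appearing in the statement.

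With these two ingredients, one direction of the theorem is essentially free: if $\cO_{\lambda\mu}^v\subseteq\overline{\cO_{\lambda\mu}^w}$, semicontinuity of the dimension invariants yields $r_{ij}(v,\lambda,\mu)\le r_{ij}(w,\lambda,\mu)$ for all relevant $(i,j)$. The reverse implication is where the real work lies. For this I would argue that the inequalities on all the $r_{ij}$ are not merely necessary but sufficient to force inclusion, and the cleanest route is to reduce to the already-established criterion for the nilpotent setting. Here I expect to use the relationship hinted at by the paper's title and abstract: construct a map that sends the pair $(U,W)$ to a square-zero upper-triangular matrix $X\in\Xc_n$ — morally, the ``difference'' operator that remembers how $U$ and $W$ sit relative to the flag — in such a way that the $B$-orbit of $(U,W)$ maps to $B\cdot w_<$ and the rank function $r_{ij}(w,\lambda,\mu)$ matches $\rk X_{ij}$ up to the background term computed above. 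Once such a rank-compatible, orbit-compatible correspondence is in place, Theorem~\ref{thm:nilp-rank2} transports the inclusion $B\cdot v_<\subseteq\overline{B\cdot w_<}$ back to $\cO_{\lambda\mu}^v\subseteq\overline{\cO_{\lambda\mu}^w}$.

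The main obstacle, I expect, is the sufficiency direction, and specifically making the correspondence with the nilpotent picture precise enough that closure relations pull back correctly. Semicontinuity only gives inclusion of closures in the ``easy'' direction; to go from rank inequalities to an actual degeneration one typically needs either an explicit one-parameter family realizing each covering relation, or a dimension count combined with the fact that $\overline{\cO_{\lambda\mu}^w}$ is a union of orbits whose invariants satisfy the inequalities. I anticipate that the restriction of the ambient rank inequalities to the indices with $s_i(\lambda,\mu)=0$, $s_j(\lambda,\mu)=2$ is exactly what certifies that the poset $\cI_n(\lambda,\mu)$ embeds as an induced subposet of $(\cI_n,\le)$, so the crux is verifying that no additional relations are created or destroyed by passing from the full flag-incidence data to the restricted arc-counting ranks — equivalently, that the background contributions from $\lambda$ and $\mu$ are genuinely constant across all of $\cI_n(\lambda,\mu)$ and hence cancel uniformly in every comparison.
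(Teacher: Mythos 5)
Your necessity direction (semicontinuity of $B$-invariant incidence dimensions, matched against $r_{ij}(w,\lambda,\mu)$ on the canonical representatives of Theorem~\ref{thm:smi08}\ref{thm:smi08ii}) is sound. The point you flag at the end about ``background contributions'' is not delicate: since every arc of a $(\lambda,\mu)$-consistent involution has a black left end and a white right end, the full family of Melnikov inequalities $r_{ij}(v)\le r_{ij}(w)$ for all $i<j$ is equivalent to the subfamily indexed by black--white pairs, so the restricted inequalities really do certify an induced subposet. You should state this one-line combinatorial lemma explicitly, but it is not an obstacle.

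The genuine gap is in the sufficiency direction, exactly where you locate it, and your proposed fix does not work as stated. A continuous, orbit-compatible map $\Phi\colon (U,W)\mapsto X\in\Xc_n$ with $\Phi(\cO^u_{\lambda\mu})\subseteq B\cdot u_<$ for every $u$ only lets you pull back the closed set $\overline{B\cdot w_<}$: its preimage is $\bigsqcup_{u\le w}\cO^u_{\lambda\mu}$, which is closed and contains $\cO^w_{\lambda\mu}$, hence contains $\overline{\cO^w_{\lambda\mu}}$ --- and that re-proves necessity, not sufficiency. To deduce $\cO^v_{\lambda\mu}\subseteq\overline{\cO^w_{\lambda\mu}}$ from $B\cdot v_<\subseteq\overline{B\cdot w_<}$ you need the correspondence to commute with closures in the opposite direction, and that is precisely what the paper's device supplies: not a map on the whole cell, but a slice $S\subseteq X^\circ_\lambda\times X^\circ_\mu$, an affine space of dimension $d(\Id,\lambda,\mu)$ meeting every orbit transversally, embedded by $\imath$ as a coordinate linear subspace of $\Xc_n$ (supported on black-row/white-column positions) with $\imath(\cO^w_{\lambda\mu}\cap S)\subseteq B\cdot w_<$. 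Transversality together with the dimension count $\dim(\cO^u_{\lambda\mu}\cap S)=\dim S-d(u)$ forces $\overline{\cO^w_{\lambda\mu}}\cap S=\overline{\cO^w_{\lambda\mu}\cap S}$ and its analogue inside $\Xc_n$, and only then does Theorem~\ref{thm:nilp-rank2} transport the order in both directions. The alternative you mention in passing --- explicit one-parameter degenerations realizing each covering relation of $\cI_n(\lambda,\mu)$, as in Proposition~1 of Knutson--Zinn-Justin --- is the other complete route, and is in fact the one the paper sketches for Theorem~\ref{thm:rank} itself; but you would then have to exhibit those degenerations. As written, your proposal commits to neither construction, so the hard implication remains unproved.
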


This theorem looks almost the same as Theorem~\ref{thm:nilp-rank2}. It immediately implies Theorem~\ref{thm:main}.


It is not hard to prove Theorem~\ref{thm:rank} similarly to the proof of~Theorem~\ref{thm:nilp-rank2} given in~\cite{KnutsonZinn14}: describe covering relations in the poset $\cI_n{(\lambda,\mu)}$ explicitly (this was done in~\cite{Smi08}), then for each covering relation construct an explicit degeneration of the larger orbit to the smaller one, like in Proposition~1 of~\cite{KnutsonZinn14}, and then use semicontinuity of ranks.  Then Theorem~\ref{thm:main} follows from an \emph{a posteriori} comparison of Theorem~\ref{thm:rank} with Theorem~\ref{thm:nilp-rank2}. However, this does not fully explain this ``partial order restriction phenomenon''.

We will use a different approach, a more geometric one. For this we construct a slice  $S\subset X_\lambda^\circ\times X_\mu^\circ$ which intersects all $B$-orbits transversally and  has dimension complementary to $\dim\cO_{\lambda\mu}^{\min}$. Then this slice can be  embedded into the space of upper-triangular matrices with square zero. This will be done in Section~\ref{sec:proof}.

Recall that for a poset $(M,\leq)$ a relation $a\leq b$ is said to be \emph{covering} if for any $c\in M$ such that $a\leq c\leq b$ we have either $a=c$ or $c=b$ (that is, there are no intermediate elements between $a$ and $b$).

\begin{remark}\label{rem:nocover}
	The embedding of posets $\cI_n(\lambda,\mu)\hookrightarrow \cI_n$ does not preserve covering relations.
\end{remark}

An example of such a situation is as follows. Let $n=4$; the poset structure of $\cI_4$ given by  $B$-orbits on $\Xc_4$ is shown on Figure~\ref{fig:main}. Now take $\lambda=\mu=(2,1)$. The set of involutions consistent with $s(\lambda,\mu)=(0,2,0,2)$ has five elements; they are shown on Figure~\ref{fig:twopics} (right). In this order element $(12)$ covers $(14)$, while in the order on $\cI_4$ there is an intermediate element $(13)$ between them; this element does not belong to $\cI_4(\lambda,\mu)$.

\begin{amssidewaysfigure}
\begin{tikzpicture}[scale=0.9]
	\begin{scope}[shift={(0,7)},scale=0.5, color=black!20]
		\draw (-0.5,0)--(3.5,0);
		\foreach \x in {1,...,4} \filldraw	(\x-1,0) circle [radius=0.1]; 
		\draw (0,0) to[out=90,in=90] (1,0);
		\draw (2	,0) to[out=90,in=90] (3,0);		
	\end{scope}
	\begin{scope}[shift={(5,7)}, scale=0.5]
		\draw (-0.5,0)--(3.5,0);
		\foreach \x in {1,...,4} \filldraw	(\x-1,0) circle [radius=0.1] node[below] {\footnotesize{\x}};
		\draw (0,0) to[out=90,in=90] (3,0);
		\draw (1,0) to[out=90,in=90] (2,0);
		\foreach \x in {3,4} \filldraw[fill=white]	(\x-1,0) circle [radius=0.1];	
	\end{scope}
	\draw[dashed] (5.6,6.5)--(1.7,5.6);
	\draw[dashed] (5.8,6.5)--(4.7,5.6);
	\begin{scope}[shift={(7,5)}, scale=0.5,color=black!20]
		\draw (-0.5,0)--(3.5,0);
		\foreach \x in {1,...,4} \filldraw	(\x-1,0) circle [radius=0.1];
		\draw (0,0)--(0,1);
		\draw (1,0)--(1,1);
		\draw (2	,0) to[out=90,in=90] (3,0);		
	\end{scope}
	\begin{scope}[shift={(1,5)}, scale=0.5]
		\draw (-0.5,0)--(3.5,0);
		\foreach \x in {1,...,4} \filldraw	(\x-1,0) circle [radius=0.1] node[below] {\footnotesize{\x}};
		\draw (0,0) to[out=90,in=90] (2,0);		
		\draw (1,0) to[out=90,in=90] (3,0);	
		\foreach \x in {3,4} \filldraw[fill=white]	(\x-1,0) circle [radius=0.1];
	\end{scope}
	\draw[dashed] (2,4.5)--(5.8,3.6);
	\draw[dashed] (1.8,4.5)--(0.8,3.6);
	\begin{scope}[shift={(4,5)}, scale=0.5]
		\draw (-0.5,0)--(3.5,0);
		\foreach \x in {1,...,4} \filldraw	(\x-1,0) circle [radius=0.1] node[below] {\footnotesize{\x}};
		\draw (0,0)--(0,1);
		\draw (1,0) to[out=90,in=90] (2,0);		
		\draw (3,0)--(3,1);		
		\foreach \x in {3,4} \filldraw[fill=white]	(\x-1,0) circle [radius=0.1];
	\end{scope}
	\draw[dashed] (5,4.5)--(6,3.6);
	\draw[dashed] (4.8,4.5)--(1,3.6);
	\begin{scope}[shift={(-2,5)}, scale=0.5, color=black!20]
		\draw (-0.5,0)--(3.5,0);
		\foreach \x in {1,...,4} \filldraw	(\x-1,0) circle [radius=0.1];
		\draw (2,0)--(2,1);
		\draw (3,0)--(3,1);
		\draw (0,0) to[out=90,in=90] (1,0);		
	\end{scope}	
	\begin{scope}[shift={(0,3)}, scale=0.5]
		\draw (-0.5,0)--(3.5,0);
		\foreach \x in {1,...,4} \filldraw	(\x-1,0) circle [radius=0.1] node[below] {\footnotesize{\x}};
		\draw (1,0)--(1,1);
		\draw (3,0)--(3,1);
		\draw (0,0) to[out=90,in=90] (2,0);		
		\foreach \x in {3,4} \filldraw[fill=white]	(\x-1,0) circle [radius=0.1];
	\end{scope}
	\draw[dashed] (0.8,2.5)--(3.6,1.6);
	\begin{scope}[shift={(5,3)}, scale=0.5]
		\draw (-0.5,0)--(3.5,0);
		\foreach \x in {1,...,4} \filldraw	(\x-1,0) circle [radius=0.1] node[below] {\footnotesize{\x}};
		\draw (2,0)--(2,1);
		\draw (0,0)--(0,1);
		\draw (1,0) to[out=90,in=90] (3,0);		
		\foreach \x in {3,4} \filldraw[fill=white]	(\x-1,0) circle [radius=0.1];	
	\end{scope}
	\draw[dashed] (5.6,2.5)--(3.8,1.6);
	\begin{scope}[shift={(3,1)}, scale=0.5]
		\draw (-0.5,0)--(3.5,0);
		\foreach \x in {1,...,4} \filldraw	(\x-1,0) circle [radius=0.1] node[below] {\footnotesize{\x}};
		\draw (2,0)--(2,1);
		\draw (1,0)--(1,1);
		\draw (0,0) to[out=90,in=90] (3,0);		
		\foreach \x in {3,4} \filldraw[fill=white]	(\x-1,0) circle [radius=0.1];
	\end{scope}
	\draw[dashed] (3.8,0.5)--(3.8,-0.4);
	\begin{scope}[shift={(3,-1)}, scale=0.5]
		\draw (-0.5,0)--(3.5,0);
		\foreach \x in {1,...,4} \filldraw	(\x-1,0) circle [radius=0.1] node[below] {\footnotesize{\x}};
		\draw (0,0)--(0,1);
		\draw (2,0)--(2,1);
		\draw (1,0)--(1,1);
		\draw (3,0)--(3,1);
		\foreach \x in {3,4} \filldraw[fill=white]	(\x-1,0) circle [radius=0.1];
	\end{scope}
\end{tikzpicture}
\qquad
\begin{tikzpicture}
	\begin{scope}[shift={(0,7)},scale=0.5]
		\draw (-0.5,0)--(3.5,0);
		\foreach \x in {1,...,4} \filldraw	(\x-1,0) circle [radius=0.1] node[below] {\footnotesize{\x}};
		\draw (0,0) to[out=90,in=90] (1,0);
		\draw (2	,0) to[out=90,in=90] (3,0);
		\foreach \x in {2,4} \filldraw[fill=white]	(\x-1,0) circle [radius=0.1];
	\end{scope}
	\draw[dashed] (0.6,6.5)--(-1,5.6);
	\draw[dashed] (1,6.5)--(7.5,5.6);
	\begin{scope}[shift={(5,7)}, scale=0.5,color=black!20]
		\draw (-0.5,0)--(3.5,0);
		\foreach \x in {1,...,4} \filldraw	(\x-1,0) circle [radius=0.1];
		\draw (0,0) to[out=90,in=90] (3,0);
		\draw (1,0) to[out=90,in=90] (2,0);
		\foreach \x in {3,4} \filldraw[fill=white]	(\x-1,0) circle [radius=0.1];	
	\end{scope}
	\begin{scope}[shift={(7,5)}, scale=0.5]
		\draw (-0.5,0)--(3.5,0);
		\foreach \x in {1,...,4} \filldraw	(\x-1,0) circle [radius=0.1] node[below] {\footnotesize{\x}};
		\draw (0,0)--(0,1);
		\draw (1,0)--(1,1);
		\draw (2	,0) to[out=90,in=90] (3,0);
		\foreach \x in {2,4} \filldraw[fill=white]	(\x-1,0) circle [radius=0.1];	
	\end{scope}
	\draw[dashed] (7.6,4.5)--(3.8,1.6);
	\begin{scope}[shift={(1,5)}, scale=0.5,color=black!20]
		\draw (-0.5,0)--(3.5,0);
		\foreach \x in {1,...,4} \filldraw	(\x-1,0) circle [radius=0.1];
		\draw (0,0) to[out=90,in=90] (2,0);		
		\draw (1,0) to[out=90,in=90] (3,0);	
		\foreach \x in {3,4} \filldraw[fill=white]	(\x-1,0) circle [radius=0.1];
	\end{scope}
	\begin{scope}[shift={(4,5)}, scale=0.5,color=black!20]
		\draw (-0.5,0)--(3.5,0);
		\foreach \x in {1,...,4} \filldraw	(\x-1,0) circle [radius=0.1];
		\draw (0,0)--(0,1);
		\draw (1,0) to[out=90,in=90] (2,0);		
		\draw (3,0)--(3,1);		
		\foreach \x in {3,4} \filldraw[fill=white]	(\x-1,0) circle [radius=0.1];
	\end{scope}
	\draw[dashed] (-1.2,4.5)--(3.6,1.6);
	\begin{scope}[shift={(0,3)}, scale=0.5,color=black!20]
		\draw (-0.5,0)--(3.5,0);
		\foreach \x in {1,...,4} \filldraw	(\x-1,0) circle [radius=0.1];
		\draw (1,0)--(1,1);
		\draw (3,0)--(3,1);
		\draw (0,0) to[out=90,in=90] (2,0);		
		\foreach \x in {3,4} \filldraw[fill=white]	(\x-1,0) circle [radius=0.1];
	\end{scope}
	\begin{scope}[shift={(5,3)}, scale=0.5,color=black!20]
		\draw (-0.5,0)--(3.5,0);
		\foreach \x in {1,...,4} \filldraw	(\x-1,0) circle [radius=0.1];
		\draw (2,0)--(2,1);
		\draw (0,0)--(0,1);
		\draw (1,0) to[out=90,in=90] (3,0);		
		\foreach \x in {3,4} \filldraw[fill=white]	(\x-1,0) circle [radius=0.1];	
	\end{scope}
	\begin{scope}[shift={(3,1)}, scale=0.5]
		\draw (-0.5,0)--(3.5,0);
		\foreach \x in {1,...,4} \filldraw	(\x-1,0) circle [radius=0.1] node[below] {\footnotesize{\x}};
		\draw (2,0)--(2,1);
		\draw (1,0)--(1,1);
		\draw (0,0) to[out=90,in=90] (3,0);		
		\foreach \x in {2,4} \filldraw[fill=white]	(\x-1,0) circle [radius=0.1];
	\end{scope}
	\draw[dashed] (3.8,0.5)--(3.8,-0.4);
	\begin{scope}[shift={(3,-1)}, scale=0.5]
		\draw (-0.5,0)--(3.5,0);
		\foreach \x in {1,...,4} \filldraw	(\x-1,0) circle [radius=0.1] node[below] {\footnotesize{\x}};
		\draw (0,0)--(0,1);
		\draw (2,0)--(2,1);
		\draw (1,0)--(1,1);
		\draw (3,0)--(3,1);
		\foreach \x in {2,4} \filldraw[fill=white]	(\x-1,0) circle [radius=0.1];
	\end{scope}
\end{tikzpicture}
\caption{Inclusion order on $\cI_4(\lambda,\mu)$ for $\lambda=\mu=(2,2)$ (left) and $\lambda=\mu=(2,1)$ (right)}\label{fig:twopics}
\end{amssidewaysfigure}

\section{Proof of the main result}\label{sec:proof}

The main result is immediately implied by the following lemma. 

\begin{lemma}\label{lem:slice} There exists a subvariety (slice) $S\subset X_\lambda^\circ\times X_\mu^\circ$ such that:
\begin{enumerate}[(i)]
	\item $S$ is isomorphic to an affine space of dimension $\dim S=d(\Id,\lambda,\mu)$. That is, $\dim S$ equals the codimension of $\cO_{\lambda\mu}^{\min}$;
	\item $S$ intersects each orbit closure $\overline{\cO_{\lambda\mu}^w}$ transversally; in particular, $S\cap \cO^{\min}_{\lambda\mu}=\{\mathrm{pt}\}$;
	\item there exists an embedding $\imath \colon S\hookrightarrow \Xc_n$ such that for each $w\in\cI_n(\lambda,\mu)$ we have 
	\[
	\imath(\cO^w_{\lambda\mu}\cap  S)\subseteq B\cdot w_<.
	\]
\end{enumerate}
\end{lemma}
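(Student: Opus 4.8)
The plan is to build the slice $S$ by hand, fixing the second subspace at the coordinate flag $U_\mu$ and shearing only the first one, and to choose $\imath$ so that it sends the distinguished representative of each orbit $\cO_{\lambda\mu}^w$ to the matrix $w_<$. The whole lemma then reduces to one rank identity.

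Call the node $i$ \emph{black}, \emph{white} or \emph{grey} according to $s_i(\lambda,\mu)=0,2,1$, and take as base point $p_0=(U_\lambda,U_\mu)$, the $(T\times T)$-fixed point of $\cO^{\min}_{\lambda\mu}$. Define $S$ to consist of the pairs $(U(t),U_\mu)$ in which $U_\mu$ and the grey-$\lambda$ basis vectors $e_l$ (those with $s_l(\lambda)=1,\ s_l(\mu)=0$) are kept, while each white basis vector $e_j$ of $U_\lambda$ is replaced by $v_j=e_j+\sum_{a<j,\ a\ \mathrm{black}}t_{aj}e_a$. The parameters $t=(t_{aj})$ range over all black–white pairs $a<j$; adding black entries ($s_a(\lambda)=0$) below a pivot does not move the pivot, so $U(t)\in X_\lambda^\circ$ and $S\subset X_\lambda^\circ\times X_\mu^\circ$ is an affine space of dimension $\#\{a<j: a\ \mathrm{black},\ j\ \mathrm{white}\}=d(\Id,\lambda,\mu)$, which is~(i). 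I then set $\imath(U(t),U_\mu)=X(t)$, where $X(t)$ is the strictly upper-triangular matrix with $(a,j)$-entry $t_{aj}$ for black $a<$ white $j$ and zero otherwise. Since no index is simultaneously black and white, $X(t)^2=0$, so $X(t)\in\Xc_n$ and $\imath$ is a linear closed embedding.

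For~(iii), note that the explicit representative of $\cO^w_{\lambda\mu}$ in Theorem~\ref{thm:smi08} lies in $S$ at the point $t^{(w)}$ with $t^{(w)}_{aj}=1$ if $w(a)=j$ and $0$ otherwise, and that $\imath(t^{(w)})=w_<$. The core of the argument is the identity, valid for all $i\le j$,
\[
\rk X(t)_{ij}=\#\{p\le j:\ p\ \mathrm{white}\}+\#\{l<i:\ l\ \text{grey-}\lambda\}-\dim\bigl(U(t)\cap((U_\mu\cap F_j)+F_{i-1})\bigr),
\]
where $X(t)_{ij}$ is the submatrix of rows $i,\dots,n$ and columns $1,\dots,j$ as in Remark~\ref{rem:ranks} and $F_c=\langle e_1,\dots,e_c\rangle$. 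To prove it I would expand a general vector of $U(t)$ in the standard basis and impose membership in the coordinate subspace $(U_\mu\cap F_j)+F_{i-1}$: the free coefficients are those of the white vectors $v_p$ with $p\le j$ and of the grey-$\lambda$ vectors $e_l$ with $l<i$, subject to one linear relation for each black index $a\ge i$, and the rank of that system of relations is exactly $\rk X(t)_{ij}$. Since the right-hand side is a $B$-invariant of the pair $(U(t),U_\mu)$ up to constants depending only on $(\lambda,\mu,i,j)$, the quantity $\rk X(t)_{ij}$ is constant along $\cO^w_{\lambda\mu}\cap S$ and equals its value $\rk(w_<)_{ij}$ at $t^{(w)}$. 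As the array $(\rk X_{ij})$ is a complete invariant of the $B$-orbits in $\Xc_n$ (Remark~\ref{rem:ranks} and Theorem~\ref{thm:nilp-rank2}), this forces $X(t)\in B\cdot w_<$ for every $t\in\cO^w_{\lambda\mu}\cap S$, giving~(iii).

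For transversality~(ii) I would exhibit a unipotent subgroup $H\subset B$ with $\dim H=|\lambda|+|\mu|-d(\Id,\lambda,\mu)$ such that the action map $H\times S\to X_\lambda^\circ\times X_\mu^\circ$ is an isomorphism: $H$ is assembled from the root subgroups used first to move an arbitrary $W\in X_\mu^\circ$ back to $U_\mu$ and then, via $\Stab_B(U_\mu)$, to clear every non-black–white entry in the echelon form of $U$. Granting the isomorphism, each $B$-orbit $\cO$ is $H$-stable, so $\cO=H\cdot(\cO\cap S)$ and $H\times(\cO\cap S)\xrightarrow{\ \sim\ }\cO$; hence $S$ is a global cross-section meeting every orbit, and every orbit closure, transversally, with $\cO^{\min}_{\lambda\mu}\cap S=\{p_0\}$ by the dimension count. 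The rank identity above is clean once the coordinates are fixed and is what pins each orbit slice inside a single Melnikov orbit; the genuine obstacle is~(ii), since the diagonal $B$-action entangles the motion of $W$ with the residual shearing of $U$, and one must verify that the chosen root subgroups really close up into a subgroup $H$ and that $H\times S\to X_\lambda^\circ\times X_\mu^\circ$ is an isomorphism rather than merely dominant. I expect this to reduce, after ordering the roots compatibly with the black/white/grey partition, to the invertibility of an explicit unipotent triangular change of variables, as in the model case $\lambda=\mu$ (where $U_\lambda=U_\mu$ and the transformation is literally $(x,t)\mapsto(t+x,x)$).
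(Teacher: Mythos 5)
Your construction is essentially the paper's: the same slice (second factor frozen at $U_\mu$, white pivot vectors of $U_\lambda$ sheared by black coordinates $t_{aj}$), the same linear embedding $t\mapsto X(t)$ into $\Xc_n$, the same distinguished points $t^{(w)}\mapsto w_<$, and the same reduction of (iii) to constancy of the southwestern ranks $\rk X(t)_{ij}$ along $\cO^w_{\lambda\mu}\cap S$ --- you obtain that constancy from a $B$-invariant intersection-dimension formula, where the paper instead tracks the row/column operations induced by $b\in\Stab_B(U_\mu)$, but both hinge on Remark~\ref{rem:ranks}. For (ii) the paper merely asserts a ``direct computation,'' so your cross-section strategy via a complementary unipotent subgroup $H$ is a reasonable (and, as you yourself flag, still unverified) way of supplying the detail that the paper also omits.
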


\begin{proof} We construct this slice explicitly. It will consist of a pair of two subspaces $(U(t_{ij}),W)$, where $W=\langle e_{\mu_m+1}, e_{\mu_{m-1}+2},\dots,e_{\mu_1+m}\rangle$ is fixed, and $U$ is spanned by vectors $e_{\lambda_i+k-i}$, where $s_{\lambda_i+k-i}(\lambda,\mu)=1$ (that is, the corresponding vertex of the arc diagram is grey), and by vectors
\[
e_j+\sum_{i<j, s_i(\lambda,\mu)=0} t_{ij}e_i,
\]
where $j$ runs over the set of white vertices (i.e., $s_j(\lambda,\mu)=2$).

It is clear that all such subspaces are in the $(B\times B)$-orbit $X_\lambda^\circ\times X_\mu^\circ$. The number of parameters defining $S$ is equal to the number of pairs $(i,j)$, with $i<j$, $s_i(\lambda,\mu)=0$, and $s_j(\lambda,\mu)=2$, that is, to $d(\Id,\lambda,\mu)$. 

The slice $S$ contains  the representative of each of the orbits $\cO^w_{\lambda\mu}$ (see Theorem~\ref{thm:smi08}, \ref{thm:smi08ii}). Indeed, given $w\in\cI_n(\lambda,\mu)$, take $t_{ij}=1$ if $w(i)=j$ and $i<j$, and $t_{ij}=0$ otherwise. Transversality of $S$ and $\cO^w_{\lambda\mu}$  is shown by direct computation.

The final part is to construct the embedding of $S$ into $\Xc_n$. This is done in the most obvious way: a point of the slice defined by parameters $(t_{ij})$ goes to matrix with its $(i,j)$-th element equal to $t_{ij}$. Since the sets of all possible $i$'s (black vertices) and $j$'s (white vertices) are disjoint, the square of such a matrix is zero.

Let us  prove the last assertion: $\imath({\cO^w_{\lambda\mu}}\cap S)\subseteq \overline{B\cdot w_<}$. Indeed, the ``canonical'' pair of subspaces $(U,W)\subset \cO_{\lambda\mu}^w$ is mapped exactly into $w_<$. Moreover, every element of \mbox{$\cO^w_{\lambda\mu}\cap S$} has the form $(b\cdot U,W)$ for some element $b\in B$. So the matrix $X=\imath((b\cdot U,W))\in \Xc_n$ is obtained from $w_<$ by adding linear combinations of columns with smaller numbers to columns with bigger numbers and linear combinations of rows with larger numbers to rows with smaller numbers. These operations do not change the ranks of the southwestern submatrices $X_{ij}$, so all such elements are contained in the same orbit $B\cdot w_<$.
\end{proof}

\begin{example}
	Consider this construction for $\lambda=(4,4,2)$ and $\mu=(3,3,1,1)$. In this case, $s(\lambda,\mu)=(0,1,2,0,0,2,2)$. Vertices 1, 4, 5 of the arc diagram are black, vertices 3, 6, 7 are white, and vertex 2 is grey. Then $S$ is 7-dimensional, and the corresponding subspaces look as follows:
	\[
	U(t_{ij})=\langle e_3+t_{13}e_1, e_6+t_{16}e_1+t_{46}e_4, e_7+t_{17}e_1+t_{47}e_7\rangle,\qquad W=W(t_{ij})=\langle e_2,e_3,e_6,e_7\rangle.
	\]
	The map $S\hookrightarrow \Xc_n$ is as follows:
	\[
	(U(t_{ij}),W(t_{ij}))\mapsto\begin{pmatrix}
		0 & 0 & t_{13} & 0 & 0 & t_{16} & t_{17}\\
		0 & 0 &0 & 0 & 0 &0 &0 \\
		0 & 0 &0 & 0 & 0 &0 &0 \\
		0 &0 &0 &0 &0 & t_{46} & t_{47}\\
		0 &0 &0 &0 &0 & t_{56} & t_{57}\\
		0 & 0 &0 & 0 & 0 &0 &0 \\
		0 & 0 &0 & 0 & 0 &0 &0 \\
	\end{pmatrix}.
	\]

\end{example}

\section*{Acknowledgements} I am grateful to Mikhail Ignatyev, Allen Knutson, Anna Melnikov and Dmitry Timashev for useful discussions on different stages of this project.


\end{document}